\numberwithin{equation}{section}
\newtheorem{theo}{{\bf{Theorem}}}[section]
\newtheorem{prop}[theo]{{\bf Proposition}}
\newtheorem{lem}[theo]{{\bf Lemma}}
\newtheorem{rem}[theo]{{\bf Remark}}
\newtheorem{defi}[theo]{{\bf Definition}}
\newtheorem{notation}[theo]{{\bf Notation}}
\renewcommand{\proof}{\noindent{\bf Proof.\ }}
\newcommand{\no}{\nonumber}
\newcommand{\noi}{\noindent}
\begin{document}
\title{Semimartingle Representation of a class of Semi-Markov Dynamics}\thanks{This research was supported in part by NBHM 02011/1/2019/NBHM(RP)R\& D-II/585, DST/INTDAAD/P-12/2020, and DST FIST (SR/FST/MSI-105). The second author would like to acknowledge the support from CSIR SRF. The support and the resources provided by ‘PARAM Brahma Facility’ under the National Supercomputing Mission, Government of India at the Indian Institute of Science Education and Research (IISER) Pune are gratefully acknowledged.}

\author{Anindya Goswami*}
\address{IISER Pune, India}
\email{anindya@iiserpune.ac.in}
\thanks{*Corresponding author}
\author{Subhamay Saha}
\address{IIT Guwahati, India}
\email{saha.subhamay@iitg.ac.in}
\author{Ravishankar Kapildev Yadav}
\address{IISER Pune, India}
\email{ravishankar.kapildevyadav@students.iiserpune.ac.in}

\maketitle

\begin{abstract}
We consider a class of semi-Markov processes (SMP) such that the embedded discrete time Markov chain may be non-homogeneous. The corresponding augmented processes are represented as semi-martingales using stochastic integral equation involving a Poisson random measure. The existence and uniqueness of the equation are established. Subsequently, we show that the solution is indeed a SMP with desired transition rate. Finally, we derive the law of the bivariate process obtained from two solutions of the equation having two different initial conditions.
\end{abstract}

{\bf Key words} Poisson random measure, non-homogeneous Semi-Markov Processes, semi-Markov system\\
{\bf AMSC} 60G55, 60H20, 60K15
\section{Introduction}

\noindent Apparently \cite{levyp} and \cite{smith1955regenerative}, which were presented at the International Congress of Mathematicians held at Amsterdam in 1954, are the first available literature that discuss the mathematical aspects of semi-Markov processes (SMP). In L\'{e}vy's work, \cite{levyp},  the definition of SMP was presented as a generalization of Markov chain. Around the same time, independent to L\'{e}vy's work, Smith \cite{smith1955regenerative} and then Tackas \cite{takacs1954some} have also introduced SMP. Below we present a modern version of the definition.
\begin{defi} \label{def1}
$X=\{X_{t}\}_{t\geq 0}$ defined on a complete probability space $(\Omega,\mathcal{B},P)$ is an SMP with state space $\mathcal{X}$ if
\begin{enumerate}
\item $X$ is piece-wise constant r.c.l.l. process with discontinuities at $T_{1}<T_2 <\cdots< T_{n}<\cdots$,
\item and for each $n\ge 1,j\in\mathcal{X},$ and $ y>0,$ {\small\begin{equation}\label{defi_semimarkov}
    P[X_{T_{n+1}}=j,T_{n+1}-T_{n}\leq y\mid (X_0, T_0), (X_{T_k},T_{k})\; \forall 1\le k\le n]=P[X_{T_{n+1}}=j,T_{n+1}-T_{n}\leq y \mid X_{T_{n}}]
\end{equation}}
where $T_{0}\le 0< T_1$.
\end{enumerate}
$X$ is pure if $T_n \xrightarrow{a.s.}\infty$ as  $n\rightarrow \infty$. If, the right side of \eqref{defi_semimarkov} is independent on $n$, then the SMP $X$ is called time homogeneous. Otherwise, $X$ is called non-homogeneous SMP.  Here, $T_n$ is called $n$\textsuperscript{th} transition time.
\end{defi}
\noindent We recall that in Chapter 1 of \cite{boris.H2008}, a book by Boris Harmalov, a stepped SMP is introduced and in subsequent chapters further generalizations to continuous state space appears. We confine ourselves to the study of SMPs on a countable state space. The continuous-time discrete-state homogeneous Markov chains are included in this class. The class under consideration is such that the embedded discrete time Markov chain may also be non-homogeneous. However, a more general non-homogeneity that appears in a continuous time Markov process having time dependent transition rate is not included.

\noindent Unlike Definition \ref{def1}, SMP has been defined by many authors using renewal processes. For example, in \cite{pyke1961markov}, Pyke has introduced the SMPs by specifying the conditional distribution of next state and holding time given the past states and past holding times. We notice that the $\sigma$-algebra generated by the transition times is identical to that generated by the initial time and past holding times. Thus the conditional distribution in \cite{pyke1961markov} is identical to the conditional probability in Definition \ref{def1}. In \cite{pyke1961markov}, a concept of regularity has been introduced for assuring that the chain is pure. Besides, classification of the states has also been studied there. On the other hand in  \cite{pyke21961markov}, by considering the finite state SMPs, Pyke has derived expressions for the distribution functions of first passage times, as well as for the marginal distribution function.
Furthermore, the limiting behavior of a Markov Renewal process has been discussed, and the stationary probabilities have been derived. Various aspects and approaches regarding limiting behavior has also been studied by Orey \cite{Orey1961CHANGEOT}, around the same time. In \cite{bennetfox}, Bennet has studied some properties of sub-chain of SMP, which are obtained via regenerating points, if they exist.

\noindent It has been noted by several authors (see Nummelin \cite{nummelin_1976}, Athreya et al \cite{Athreya1978} and references therein) that an SMP can be augmented with the age process to obtain a jointly Markov process whose Feller property and infinitesimal generator can be derived (see Chapter 2, \cite{gihman}). In a recent work \cite{elliott2020semimartingale} Elliott has presented a semimartingale dynamics of semi-Markov chain in contrast to the traditional description of a semi-Markov chain in terms of a renewal process. This presentation is different from that in \cite{Ghosh2009RiskMO} and \cite{GhoshSaha2011} where a semimartingale representation appears using an integration with respect to a Poisson random measure(PRM). Such semimartingale representations are useful for studying several aspects including the stochastic flow of semi-Markov dynamics.

Apparently, SMP beyond the class of time homogeneity has been first studied in \cite{Hoem1972InhomogeneousSP} and \cite{manu1972}
Various different aspects and generalisations of non-homogeneous SMP(NHSMP) has further been explored in \cite{JANSSEN1984,transientcase,janssen2006applied,janssen2007semi}. In these references several applications of NHSMP has also been emphasised.

\noindent In this paper we consider a system of SDEs driven by a PRM and having coefficients depending on a given transition rate function and an additional gaping parameter. It is worth noting that neither the coefficients are compactly supported nor the intensity measure is finite. So, we produce a self-contained proof of the existence and uniqueness of the solution to the SDE. We further show that the state component of the solution is a pure semi-Markov process with the given transition rate function. The law of a single solution of course do not depend on the gaping parameter. However, we show, the joint distribution of a couple of solutions with different initial conditions do depend on the additional gaping parameter. The combination of state processes of two solutions forms a semi-Markov system(SMS) or a component-wise semi-Markov (CSM) process having dependent components. The SMSs \cite{vasileiou_vassiliou_2006,exotic,vassiliou1992},  or CSMs \cite{milan_pricing_derivative, MAN} with independent components have been introduced for modelling some random dynamics. However, a CSM with dependent components has not been studied in the literature yet. The law of the CSM has been calculated in terms of the infinitesimal generator of the augmented process. As per our knowledge, the present paper is the first work on the correlated semi-Markov system. The immense applicability of semi-Markov processes and semi-Markov systems is well known. So, the formulation and the results related to the correlated semi-Markov system, presented in this paper have significantly expanded the scope of further theoretical and applied studies. In particular this has opened up the possibility of studying a system generated by a semi-Markov flow. The questions related to the meeting and merging events of multiple particles of a semi-Markov flow have far reaching implications. In view of these, the present paper appears as a stepping stone for several future studies.

\noindent The rest of this paper is arranged in the following manner. We present a class of SMPs as semi-martingales using an SDE involving PRM in Section 2. Here we first consider the SDE and then we show that it has almost sure unique solution. The state component is shown to possess semi-Markov law in Section 3. The  transition parameters of the semi-Markov process are also obtained in that section. In Section 4 the joint distribution of solutions having two different initial conditions has been derived in terms of the infinitesimal generator of the combined process. Some concluding remarks and the scope of future research directions are added in Section 5.

\section{SDE using Poisson Random Measure}
\noindent Let $(\Omega,\mathcal{F},\{\mathcal{F}_t\}_{t\ge0},P)$ be the underlying filtered probability space satisfying the usual hypothesis and $\mathcal{X}=\{1,2,\mathellipsis\}$ the state space. We wish to construct a semi-Markov process on this state space with a given transition rate function. To this end we first embed $\mathcal{X}$ in $\mathbb{R}$ and endow with a total order $\prec_1$, which in turn induces a total order $\prec_2$ on $\mathcal{X}_{2}:=\{(i,j)\in \mathcal{X}^{2}\ \mid  \ i\neq j\}$ by following lexicographic order. Let $\mathcal{B}(\mathbb{R}^{d})$ denote the Borel $\sigma$-algebra on $\mathbb{R}^{d}$ and $m_{d}$ denote the Lebesgue measure on $\mathbb{R}^{d}.$ Also let  $\mathbb{N}_{0}$ and $\mathbb{R}_{+}$ denote the set of non-negative integers and the set of non-negative real numbers respectively. For each $y\in \mathbb{R}_{+}$,  $n\in \mathbb{N}_{0}$, $\lambda(y,n):=(\lambda_{ij}(y,n))$ denote a matrix in which the $i\textsuperscript{th}$ diagonal element is negative of $\lambda_{i}(y,n) :=\sum_{j\in\mathcal{X}\setminus\{i\}}\lambda_{ij}(y,n)$
and for each $(i,j)\in \mathcal{X}_2$, $ \lambda_{ij}\colon\mathbb{R}_{+}\times \mathbb{N}_0\to\mathbb{R}_{+}$ is a bounded measurable function satisfying the following two assumptions.
\begin{enumerate}
\item[{\bf (A1)}.]  If $c_i:=\underset{j\in\mathcal{X}\setminus\{i\}}{\sum}\|\lambda_{ij}(\cdot,\cdot)\|_{L^{\infty}_{(\mathbb{R}_+\times\mathbb{N}_0)}}$, $c:=\sup_{i}c_i<\infty$.
\item[{\bf (A2)}.] For each $n$ in $\mathbb{N}_0$, and $i$ in $\mathcal{X}$, $\lim_{y\rightarrow\infty}\gamma_{i}(y,n) =\infty$, where $ \gamma_{i}(y,n):=\int_{0}^{y}\lambda_{i}(y',n)dy'$.
\end{enumerate}

\noindent  For each $(i,j)\in \mathcal{X}_{2},$ we consider another measurable function $\tilde{\lambda}_{ij}\colon\mathbb{R}_{+}\times\mathbb{N}_0\to \mathbb{R}_{+}$ such that for each $y\in \mathbb{R}_{+}$, and $n\in \mathbb{N}_0$
\begin{align} \label{a}
\lambda_{ij}(y,n)\leq\tilde{\lambda}_{ij}(y,n),
\end{align}
and also for almost every $y \in \mathbb{R}_{+}$ and $n\in \mathbb{N}_0$
\begin{align}\label{tildeinequality1}
\tilde{\lambda}_{ij}(y,n)\leq \|\lambda_{ij}(\cdot,\cdot)\|_{L^{\infty}_{(\mathbb{R}_+\times\mathbb{N}_0)}}.
\end{align}
Now for  each $y\in \mathbb{R}_{+}$, and $n\in \mathbb{N}_0$, with the help of $\lambda(y,n)$ and $\tilde{\lambda}(y,n):=(\tilde\lambda_{ij}(y,n))$, we introduce a disjoint collection of intervals $\Lambda:= \{\Lambda_{ij}(y,n)\colon (i,j)\in \mathcal{X}_{2}\}$, by
\begin{align}\label{genricinterval}
\Lambda_{ij}(y,n)=\left(\sum_{(i',j')\prec_2(i,j)}\tilde{\lambda}_{i'j'}(y,n)\right)\ +\Big[0,\lambda_{ij}(y,n)\Big)
\end{align}
where $a+B=\{a+b\mid b\in B\}$ for $a\in \mathbb{R}, B\subset \mathbb{R}$. Clearly, for each $y$ and $n$, the interval $\Lambda_{ij}(y,n)$ is of length $\lambda_{ij}(y,n)$. If $C_i:= c_i + \sum_{k \prec_1 i}c_k$, according to (A1) and \eqref{tildeinequality1}, the set $\Lambda_i(y,n):= \cup_{j\in \mathcal{X}\setminus\{i\}} \Lambda_{ij}(y,n)$  is contained in the finite interval $\left[0,C_i\right]$, for almost every $y\in \mathbb{R}_{+}$ and each $n$.
Using the above intervals we define  $h_{\Lambda} \colon \mathcal{X}\times \mathbb{R}_{+} \times \mathbb{N}_0\times \mathbb{R}\rightarrow \mathbb{R}$ as
	 \begin{equation}\label{h}
	     h_{\Lambda}(i, y, n, v):=\sum_{j\in \mathcal{X}\setminus\{i\}}\!(j-i)\mathds{1}_{\Lambda_{ij}(y,n)}(v)
	 \end{equation}
	   and $g_{\Lambda} \colon \mathcal{X}\times \mathbb{R}_{+} \times \mathbb{N}_0\times\mathbb{R}\rightarrow \mathbb{R}$ as
	   \begin{equation}\label{g}
	   g_{\Lambda}(i, y, n, v):=\sum_{j\in \mathcal{X}\setminus\{i\}}\!\mathds{1}_{\Lambda_{ij}(y,n)}(v).
	   \end{equation}
Here $\mathds{1}_B$ is the indicator function of the set $B$. These functions are piece-wise constant in $v$ variable. Using these, we consider the following system of coupled stochastic integral equations in $X$, $Y$ and $N$
     \begin{align}
         \label{eqX}
	     X_{t}&=X_{0}+\displaystyle \int_{0+}^{t}\int_{\mathbb{R}}h_{\Lambda}(X_{u-}, Y_{u-}, N_{u-},v)\wp(du,dv)\\
	   	\label{eqY}
	   	Y_{t}&=Y_{0}+t-\displaystyle \int_{0+}^{t}(Y_{u-})\int_{\mathbb{R}}g_{\Lambda}(X_{u-}, Y_{u-},N_{u-},v)\wp(du, dv)\\
	   	\label{eq_N(t)}
	   	N_{t}&=\int_{0+}^{t}\int_{\mathbb{R}}g_{\Lambda}(X_{u-},Y_{u-},N_{u-},v)\wp(du,dv)
   	\end{align}
for $t\ge 0$, where $\int_{0+}^{t}$ is integration over the interval $(0,t]$, $\wp(du,\ dv)$  is a PRM on $\mathbb{R}_{+}\times \mathbb{R}$ with intensity $m_{2}(du,dv)$, and defined on the probability space $(\Omega,\mathcal{F},P).$ We also assume that $\{\wp((0,t] \times dv)\}_{t\geq 0}$ is adapted to $\{\mathcal{F}_{t}\}_{t\geq 0}$.
For $t\ge 0$, we rewrite the above coupled integral equation \eqref{eqX}-\eqref{eq_N(t)} as a vector form given by,
\begin{align}\label{vectorform}
    Z_t=Z_0+\int_0^ta(Z_{u-})du+ \int_{0+}^t\int_{\mathbb{R}} J(Z_{u-},v)\wp (du,dv),
\end{align}
where for each $t\ge 0$, $Z_t=(X_t,Y_t,N_t)$, $N_0=0$, $z=(i,y,n)$, $a(z)=(0,1,0)$, and $J(z,v)$ is the vector $(h_{\Lambda}(z,v), -yg_{\Lambda}(z,v), g_{\Lambda}(z,v))$.
\begin{rem}
The existing results on the general theory of SDE are not directly applicable for assuring existence and uniqueness of solution to the system of integral equations \eqref{eqX}-\eqref{eq_N(t)}. We recall that Theorem 3.4 (p-474) of \cite{cinlar2011probability} assumes compact support of the integrands whereas Theorem IV.9.1 (p-231) of \cite{Ikeda_1981} assumes finiteness of the intensity measure on the complement of a neighbourhood of origin. For the system \eqref{eqX}-\eqref{eq_N(t)} neither the integrands are compactly supported nor the intensity of Poisson random measure is finite in the complement of any bounded set. For this reason we produce an original proof of existence and uniqueness of \eqref{eqX}-\eqref{eq_N(t)}.
\end{rem}

\begin{theo}\label{theo3.1}
Under assumption (A1), there exists an increasing sequence of stopping times $\{T_n\}_{n=1}^{\infty}$ such that the following hold.
\begin{enumerate}
\item The coupled system of stochastic integral equations \eqref{eqX}-\eqref{eq_N(t)} has a unique strong solution $(X,Y,N)={(X_t,Y_t,N_t)}_{t\in [0,\tau)}$ where $\tau=\lim_{n\to\infty}T_n$.
\item Almost surely $X$, $Y$, and $N$ have r.c.l.l. paths respectively and they jump only at $T_n$, for each $n$. While $X$ and $N$ are piece-wise constants, $Y$ is piece-wise linear.
\item We set $T_{0} := -Y_{0}$.  For each $n\in \mathbb{N}$, (i) $Y_{T_n}=0$, (ii) $Y_{T_{n}-} = T_{n}- T_{n-1}$, and (iii) $N_{T_n}=n$. Also (iv)  $Y_t =t-T_{N_t}$ for all $t\in \mathbb{R}_{+}$.
\end{enumerate}
\end{theo}
\begin{proof}
We recall that due to Assumption (A1) and \eqref{tildeinequality1}, for almost every $y\in \mathbb{R}_{+}$, and each $i,n$ the union of  intervals $\{ \Lambda_{ij}(y,n) \mid {j\in \mathcal{X}\setminus\{i\}} \}$ is contained in $[0, C_i]$. For each $\omega\in \Omega$ and $i\in \mathcal{X}$ we define the set $\mathcal{D}_i := \{s\in (0, \infty)\mid \wp(\omega)(\{s\}\times[0, C_i]) > 0\} $, the collection of time coordinates of the point masses of a realisation of the PRM $\wp(\omega)$ in which the second component is not more than $C_i$. Since, the interval $[0, C_i]$ has finite Lebesgue measure, by Lemma A.1, $\mathcal{D}_i$ has no limit points in $\mathbb{R}$ with probability 1. Thus for each $i\in \mathcal{X}$ we can enumerate $\mathcal{D}_i$, in increasing order
\begin{equation}\label{enumerate_i}
    \mathcal{D}_i = \{\sigma^i_l\}_{l=1}^{\infty},\textrm{ where } \sigma^i_1<\cdots< \sigma^i_l < \sigma^i_{l+1}<\cdots \textrm{ for each }\omega.
\end{equation} For each $l\in \mathbb{N}, \sigma^i_l\colon\Omega\to (0,\infty]$ and  $\{\sigma^i_l\leq t\} = \{\omega \mid \wp((0, t]\times[0, C_i])\ge l\}\in \mathcal{F}_t$, as $\wp((0, t]\times[0, C])$ is $\mathcal{F}_t$ measurable. Hence $\sigma^i_l$ is a stopping time for each $i\in \mathcal{X}$  and $l\ge 1$.
As $\mathcal{D}_i$ has no limit points, $\sigma^i_l \uparrow \infty$ as $l\to \infty$ almost surely.

\noindent Here we plan to construct an increasing sequence $\{\Bar{\sigma}_m\}_{m=1}^{\infty}$ so that we can first define a solution to equations \eqref{eqX}-\eqref{eq_N(t)} on the time interval $[0,\ \Bar{\sigma}_{1}]$ and then on the next time interval $(\Bar{\sigma}_{1},\ \Bar{\sigma}_{2}]$, and so on. For a fixed $\omega\in \Omega$, $X$, $Y$ at time $t=0$ are $X_0 \in \mathcal{X}$, and $Y_0\in \mathbb{R}_+$ respectively. Using \eqref{enumerate_i}, we consider the increasing sequence $\mathcal{D}_{X_0}$ and call the first element of this sequence as $\Bar{\sigma}_1$. Hence,
$$\wp(\omega)\left([0,\ \Bar{\sigma}_{1})\times\ [0,\ C_{X_0}]\right)=0,$$
and thus  for $ t\in [0,\ \Bar{\sigma}_{1})$
\begin{align*}
    X_{t}(\omega)&=X_{0}+\int_{0+}^{t}\int_{0}^{C_{X_0}}h_{\Lambda}(X_{u-},\ Y_{u-},N_{u-},\ v)\wp(\omega)(du,\ dv) =X_{0},\\
    Y_{t}(\omega)&=Y_{0}+t-\int_{0+}^{t}\int_{0}^{C_{X_0}}(Y_{u-})g_{\Lambda}(X_{u-},\ Y_{u-},N_{u-},\ v)\wp(\omega)(du,\ dv)=Y_{0}+t,\\
    N_{t}(\omega)&=\int_{0+}^{t}\int_{0}^{C_{X_0}}g_{\Lambda}(X_{u-},\ Y_{u-},N_{u-},\ v)\wp(\omega)(du,\ dv)=0.
\end{align*}
Note that in the above integrations the domain $\mathbb{R}$ has been replaced by the compact set $[0,C_{X_0}]$, as the integrands vanish outside this interval. Hence at $t=\Bar{\sigma}_{1},$
\begin{align*}
    X_{\Bar{\sigma}_{1}}(\omega)&=X_{0}+\int_{0}^{C_{X_0}}h_{\Lambda}(X_{0},\ Y_{\Bar{\sigma}_{1}-},N_{0},\ v)\wp(\omega)(\{\Bar{\sigma}_{1}\}\times dv),\\
    Y_{\Bar{\sigma}_{1}}(\omega)&=Y_{0}+\Bar{\sigma}_{1}-\int_{0}^{C_{X_0}}(Y_{\Bar{\sigma}_{1}-})g_{\Lambda}(X_{0},\ Y_{\Bar{\sigma}_{1}-},N_{0},\ v)\wp(\omega)(\{\Bar{\sigma}_{1}\}\times dv),\\
    N_{\Bar{\sigma}_{1}}(\omega)&=\int_{0}^{C_{X_0}}g_{\Lambda}(X_{0},\ Y_{\Bar{\sigma}_{1}-},N_{0},\ v)\wp(\omega)(\{\Bar{\sigma}_{1}\}\times \ dv).
\end{align*}
Hence the solution is unique in the time interval $[0,\ \Bar{\sigma}_{1}]$. Again by using \eqref{enumerate_i}, we have an increasing sequence $\mathcal{D}_{X_{\Bar{\sigma_1}}}$. As  $\mathcal{D}_{X_{\Bar{\sigma}_1}}\cap(\Bar{\sigma}_1,\infty)$ is nonempty almost surely, there is the first element of the set, denoted by $\Bar{\sigma}_2$.  Thus we have,
$$\wp(\omega)\left((\Bar{\sigma}_1,\ \Bar{\sigma}_{2})\times\ [0,\ C_{X_{\Bar{\sigma}_{1}}}]\right)=0$$
and for $ t\in (\Bar{\sigma}_1,\ \Bar{\sigma}_{2})$, as before
\begin{align*}
    X_{t}(\omega)&=X_{\Bar{\sigma}_{1}}+\int_{\Bar{\sigma}_{1}+}^{t}\int_{0}^{C_{X_{\Bar{\sigma}_{1}}}}h_{\Lambda}(X_{u-},\ Y_{u-},N_{u-},\ v)\wp(\omega)\ (du,\ dv) =X_{\Bar{\sigma}_{1}},\\
    Y_{t}(\omega)&=Y_{\Bar{\sigma}_{1}}+(t-\bar{\sigma}_{1}) - \int_{\Bar{\sigma}_{1}+}^{t}\int_{0}^{C_{X_{\Bar{\sigma}_{1}}}}(Y_{u-})g_{\Lambda}(X_{u-},\ Y_{u-},N_{u-},\ v)\wp(\omega)\ (du,\ dv)=Y_{\Bar{\sigma}_{1}}+(t-\bar{\sigma}_{1}),\\
    N_{t}(\omega)&= N_{\Bar{\sigma}_{1}}+\int_{\Bar{\sigma}_{1}+}^{t}\int_{0}^{C_{X_{\Bar{\sigma}_{1}}}}g_{\Lambda}(X_{u-},\ Y_{u-},N_{u-},\ v)\wp(\omega)(du,\ dv)=N_{\Bar{\sigma}_{1}}.
\end{align*}
Hence by using above equalities, at $t=\Bar{\sigma}_{2},$
\begin{align*}
X_{\Bar{\sigma}_{2}}(\omega)&=X_{\Bar{\sigma}_{1}} +\int_{0}^{C_{X_{\Bar{\sigma}_{1}}}}h_{\Lambda}(X_{\Bar{\sigma}_{1}},\ Y_{\Bar{\sigma}_{2}-},N_{\Bar{\sigma}_{1}},\ v)\wp(\omega)(\{\Bar{\sigma}_{2}\}\times dv),\\
Y_{\Bar{\sigma}_{2}}(\omega)&=Y_{\Bar{\sigma}_{1}}+ (\Bar{\sigma}_{2}-\bar{\sigma}_{1}) -\int_{0}^{C_{X_{\Bar{\sigma}_{1}}}}(Y_{\Bar{\sigma}_{1}-})g_{\Lambda}(X_{\Bar{\sigma}_{1}},\ Y_{\Bar{\sigma}_{2}-},N_{\Bar{\sigma}_{1}},\ v)\wp(\omega)(\{\Bar{\sigma}_{2}\}\times dv),\\
N_{\Bar{\sigma}_{2}}(\omega)&= N_{\Bar{\sigma}_{1}}+\int_{0}^{C_{X_{\Bar{\sigma}_{1}}}}g_{\Lambda}(X_{\Bar{\sigma}_{1}},\ Y_{\Bar{\sigma}_{2}-},N_{\Bar{\sigma}_{1}},\ v)\wp(\omega)(\{\Bar{\sigma}_{2}\}\times\ dv).
\end{align*}
Continuing in the similar way we can construct a solution in a unique manner for each consecutive interval $(\Bar{\sigma}_{m},\ \Bar{\sigma}_{m+1}],$  where $m\geq 2$. Moreover, for a fixed $\omega$, $X_{t}(\omega)=X_{\Bar{\sigma}_{m}(\omega)}$ for all $t\in [\Bar{\sigma}_{m}(\omega),\Bar{\sigma}_{m+1}(\omega))$. Hence $X$ is an r.c.l.l. and piece-wise constant process almost surely on $[0, \lim_{m\to \infty} \bar{\sigma}_m]$. By this, Part (1) and (2) would have followed if $\{\Bar{\sigma}_{m}\}_{m\ge 1}$ were the jump times, which may not be true. For this reason, now we select an appropriate sub-sequence of $\Bar{\sigma}_{m}$ which are the jump times. To this end we first note that $\int_{0}^ {C_{X_{t-}}} g_{\Lambda}(X_{t-},\ Y_{t-},N_{t-}, v) \wp(\omega)(\{t\}\times dv)$ is zero for all $t\in(\Bar{\sigma}_m,\Bar{\sigma}_{m+1})$ for every $m\ge 1$. Using this and  $X_{\Bar{\sigma}_{m}-}=X_{\Bar{\sigma}_{m-1}}$, $N_{\Bar{\sigma}_{m}-}=N_{\Bar{\sigma}_{m-1}}$ we introduce
\begin{align}\label{n1}
l_1:=\min\{m\ge 1\colon \int_{0}^{C_{X_{\Bar{\sigma}_{m-1}}}}g_{\Lambda}(X_{\Bar{\sigma}_{m-1}},\ Y_{{\bar\sigma}_m-},N_{{\bar\sigma}_{m-1}},\ v)\wp(\omega)(\{{\bar\sigma}_m\}\times dv) \neq 0 \}.
\end{align}
From \eqref{h} and \eqref{g} it is evident that $h_{\Lambda}$ and $g_{\Lambda}$ have identical supports in $v$ variable. Hence the integrals $\int_{I}h_{\Lambda}(X_{t-},Y_{t-},N_{t-}v)\wp(\{t\} \times dv)$ is non-zero if and only if  $\int_{I}g_{\Lambda}(X_{t-},Y_{t-},N_{t-}v)\wp(\{t\} \times dv)$ is non-zero, where $I$ is any interval. Then $t={\bar\sigma}_{l_1}$ is the first time when both the integrals
$$\int_{[0,C_{X_{t-}}]} g_{\Lambda}(X_{t-},\ Y_{t-},N_{t-},\ v)\wp(\omega)(\{t\}\times dv) \quad \textrm{ and }\quad \int_{[0,C_{X_{t-}}]}h_{\Lambda}(X_{t-},\ Y_{t-},N_{t-},\ v)\wp(\omega)(\{t\}\times dv)$$ are non-zero.
Consequently, $Y_{t}=Y_{0}+t$, $X_{t}=X_{0}$, and $N_{t}=0$, for all $t\in [0,\Bar{\sigma}_{l_1})$. Hence $X_{{\bar\sigma}_{l_1}-}=X_0$, $Y_{{\bar\sigma}_{l_1}-}=Y_0+\Bar{\sigma}_{l_1}$  and $N_{{\bar\sigma}_{l_1}-}=0$. Furthermore, at $t=\bar\sigma_{l_1}$,
\begin{align*}
0&\neq \int_{0+}^{\bar \sigma_{l_1}}\int_{0}^{C_{X_{\Bar{\sigma}_{l_{1}-1}}}}g_{\Lambda}(X_{t-},\ Y_{t-},N_{t-},\ v)\wp(\omega)(dt, dv)\\
&= \int_{0}^{C_{X_{\Bar{\sigma}_{l_{1}-1}}}}g_{\Lambda}(X_{\Bar{\sigma}_{l_{1}-1}},\ Y_{\Bar{\sigma}_{l_1}-},N_{\Bar{\sigma}_{l_1-1}},\ v)\wp(\omega)(\{\Bar{\sigma}_{l_1}\}\times dv) = 1,
\end{align*}
using \eqref{n1} and the facts that $\wp(\omega)(\{\bar \sigma_{l_1}\} \times [0,C_{X_{\Bar{\sigma}_{l_{1}-1}}}]) =1$ and $g_{\Lambda}(i,y,n,v)\in \{0, 1\}$ for every $i$, $y$, $n$, and $v$. Thus from \eqref{eq_N(t)} and above expressions
\begin{align*}
N_{\Bar{\sigma}_{l_1}} =& \int_{0+}^{\Bar{\sigma}_{l_1}}\int_{0}^{C_{X_{\Bar{\sigma}_{l_{1}-1}}}}g_{\Lambda}(X_{t-},\ Y_{t-},N_{t-},\ v)\wp(\omega)(dt,dv) = 1.
\end{align*}
Similarly, using  \eqref{eqY} we have $Y_{\Bar{\sigma}_{l_1}}=0$. To see this, we write $Y_{\Bar{\sigma}_{l_1}}= Y_{\Bar{\sigma}_{l_1-}} + (Y_{\Bar{\sigma}_{l_1}}-Y_{\Bar{\sigma}_{l_1}-})$, i.e.,
\begin{align*}
Y_{\Bar{\sigma}_{l_1}} =&Y_{\Bar{\sigma}_{l_1-}} - \int_{0}^{C_{X_{\Bar{\sigma}_{l_{1}-1}}}}(Y_{\Bar{\sigma}_{l_1}-})g_{\Lambda}(X_{\Bar{\sigma}_{l_1}-}, Y_{\Bar{\sigma}_{l_1}-},N_{\Bar{\sigma}_{l_1}-},\ v)\wp(\omega)(\{\Bar{\sigma}_{l_1}\}\times dv)\\
=& Y_{\Bar{\sigma}_{l_1} -} - Y_{\Bar{\sigma}_{l_1} -}
=0.
\end{align*}
In general, for every $n\ge 1$, we set
\begin{equation}\label{subsequence}
    l_{n+1}:=\min\left\{m> l_{n}\colon \int_{0}^{C_{X_{\Bar{\sigma}_{m-1}}}}g_{\Lambda}(X_{\Bar{\sigma}_{m-1}},\ Y_{\Bar{\sigma}_m-},N_{\Bar{\sigma}_{m-1}},\ v)\wp(\omega)(\{\Bar{\sigma}_m\}\times dv) \neq 0 \right\}.
\end{equation}
 In other words, for every $t\ge 0$,
\begin{align}\label{intg}
    \int_{\mathbb{R}}g_{\Lambda}(X_{t-},\ Y_{t-},N_{t-}\ v)\wp(\{t\}\times dv)= \left\{\begin{array}{ll}
	   1 & ,\ \mathrm{if}\ t=\Bar{\sigma}_{l_n} \textrm{ for some } n\ge 1\\
	   0 &,\ \mathrm{otherwise}.
	   \end{array}\right.
\end{align}
From \eqref{intg} and \eqref{eq_N(t)} we get
\begin{align}
     N_{t}=&\sum_{\{r\ge 1\mid \Bar{\sigma}_{l_{r}}\le t \}} 1 =\sum_{r=1}^{\infty} \mathds{1}_{[\Bar{\sigma}_{l_{r}},\infty)}(t). \label{NT}
\end{align}
Moreover, from  \eqref{intg} and \eqref{eqY} we get
\begin{align}
     Y_{t}= & Y_{0}+t- \sum_{\{r\ge 1\mid \Bar{\sigma}_{l_{r}}\le t \}}Y_{\Bar{\sigma}_{l_{r}}-}=Y_{0}+t- \sum_{r=1}^{\infty}(Y_{\Bar{\sigma}_{l_{r}}-}) \mathds{1}_{[\Bar{\sigma}_{l_{r}},\infty)}(t). \label{YT}
\end{align}
Furthermore, as the support of $g_\Lambda$ and $h_\Lambda$ are identical, from  \eqref{subsequence} we have that $\int_{\mathbb{R}}h_{\Lambda}(X_{t-},\ Y_{t-},N_{t-}\ v)\wp(\{t\}\times dv)$ is nonzero if and only if   $t=\Bar{\sigma}_{l_n}$ for some $n\ge 1$. Therefore, if $n\ge 1$ is such that $\bar\sigma_{l_n} \le t < \bar\sigma_{l_{n+1}}$, then $X_t= X_{\bar\sigma_{l_n}}$. Thus we can write
\begin{align}
X_{t}=& X_0 + \sum_{\{r\ge 1\mid \Bar{\sigma}_{l_{r}}\le t \}} (X_{\Bar{\sigma}_{l_{r}}}- X_{\Bar{\sigma}_{l_{r-1}}})  =  X_0 +  \sum_{r=1}^{\infty}(X_{\Bar{\sigma}_{l_{r}}}- X_{\Bar{\sigma}_{l_{r-1}}})  \mathds{1}_{[\Bar{\sigma}_{l_{r}},\infty)}(t).\no
\end{align}
Hence $X$ and $N$ are r.c.l.l., and piece-wise constant and $Y$ is r.c.l.l., and piece-wise linear. We denote $T_n:=\Bar{\sigma}_{l_{n}}$ for each $n\ge 1$. From above, it is evident that $\{T_n\}_{n\ge 1}$ is the desired sequence of stopping times at which the processes $X$, $Y$, and $N$ jump and the properties in parts (1) and (2) hold.

\noindent Next for proving part (3), we first rewrite \eqref{YT}
\begin{align}\no
         Y_{t}= & Y_{0}+t- \sum_{\{r\ge 1\mid T_r\le t \}}Y_{T_r-}
\end{align}
and thus for all $n\in \mathbb{N},$
\begin{align*}
    Y_{T_{n}}=& Y_{0}+T_{n}- \sum_{\{r\ge 1\mid T_r\le T_{n} \}}Y_{T_{r}-}\\
    =&  \left( Y_{0}+T_{n}- \sum_{\{r\ge 1\mid T_r< T_{n} \}}Y_{T_{r}-}\right) - Y_{T_{n}-}\\
    =&  Y_{T_{n}-}-  Y_{T_{n}-} =0.
\end{align*}
Thus (i) holds. For showing (ii) we first recall that $Y_{T_1-} = Y_0+ T_1$, which is same as $T_1-T_0$. Now for $n>1$ using $Y_{T_{n-1}}=0$ we get
\begin{align*}
 Y_{T_n-}  =&  \left( Y_{0}+T_{n}- \sum_{\{r\ge 1\mid T_r< T_{n} \}}Y_{T_{r}-}\right)\\
=&(T_n-T_{n-1}) +\left( Y_{0}+T_{n-1}- \sum_{\{r\ge 1\mid T_r\le T_{n-1} \}} Y_{T_{r}-}\right)\\
=& (T_n-T_{n-1}) + Y_{T_{n-1}}= (T_n-T_{n-1}).
\end{align*}
Hence (ii) is shown. Again, \eqref{NT} implies that $N_t= \max\{r: T_r\le t \}$. That is $N_{T_n}= \max\{r: T_r\le T_n\}=n$, which proves (iii). Finally, (iv) follows using \eqref{YT} and (ii) as below
\begin{align}\no
         Y_{t}= & Y_{0}+t- \sum_{\{1\le r\le N_t \}}(T_r-T_{r-1}) = Y_0 +t - (T_{N_t}- T_0)= t-T_{N_t}. \qed
\end{align}
\end{proof}
\begin{rem}
The jump times $\{T_{n}\}_{n\ge 1}$ of the process $X$ are called the transition times. We need to prove that this sequence diverges to infinity almost surely for establishing that the solution is globally determined with probability 1. This is accomplished in the following Theorem.
\end{rem}

\begin{theo}\label{explosion}
There exists a unique strong solution $Z=(X_t,Y_t,N_t)_{t\ge 0}$ of the coupled SDE \eqref{eqX}-\eqref{eq_N(t)}
\end{theo}
\begin{proof}
Let $\{T_{n}\}_{n\ge 1}$ be as in Theorem \ref{theo3.1}.
Now we will show that $T_n$ diverges. Let $\tau:= \lim_{n\to \infty} T_n$.
Clearly,  for a fixed $\epsilon >0$, the event $\{\tau<\infty\}$ is a subset of $\cup_{n_0\ge 1}\{T_n-T_{n-1}<\epsilon,\;\forall n\ge n_0\}$. For the sake of brevity, we denote $\{T_n-T_{n-1}<\epsilon\}$ as $A_n$ for each $n$. Therefore, we will  show $P(\cap_{n\ge n_0} A_n)=0$ for each $n_0$ which is enough for proving  $P( \tau<\infty)=0$.
To this end we first compute the conditional probability of the event $A_n$ given the observations till $T_{n-1}$, using the properties of Poisson random measure. Indeed, using part (3) of Theorem \ref{theo3.1}, occurrence of $A_n$ is same as having Poisson random measure of $ \cup_{0<y<\epsilon}\left(\{ T_{n-1} +y\}\times\Lambda_{X_{ T_{n-1} }}(y,n-1)\right)$ positive.
Thus using the Lebesgue intensity of $\wp$, we have
    \begin{align}\label{inequlity1}
    &P\left(A_n\mid T_{n-1}, X_{T_{n-1}}, \ldots, T_0,X_0\right)\no\\
    &=P\left(\wp\left(\underset{0<y<\epsilon}{\cup}\left(\{ T_{n-1} +y\}\times\Lambda_{X_{ T_{n-1} }}(y,n-1)\right)\right)\neq0\mid T_{n-1}, X_{T_{n-1}}, \ldots, T_0, X_0\right)\no\\
    &=\left(1-e^{-\int_0^\epsilon \lambda_{X_{T_{n-1} }}(y,n-1)dy }\right)\no\\
        &\le \left(1-e^{-\epsilon c}\right)
\end{align}
which is a deterministic constant. We recall that the last inequality is due to Assumption (A1).
Next we note that if
\begin{align}\label{Ineq}
E\left[\prod_{n_0\le n\le l}  \mathds{1}_{A_n}\right] \le \left(1-e^{-\epsilon c}\right) E\left[\prod_{n_0\le n\le l-1} \mathds{1}_{A_n}\right]
\end{align} holds for all $l\ge n_0$, using that repeatedly, we get
\begin{align*}
P\left(\cap_{ n_0\le n} A_n\right)&\le P\left(\cap_{n_0\le n\le l}A_n\right)  = E\left[\prod_{n_0\le n\le l} \mathds{1}_{A_n}\right] \le (1-e^{-\epsilon c})^{l-n_0+1}
\end{align*}
for all $l\ge n_0$. The right side clearly vanishes as $l\to \infty$, and thus $P(\cap_{ n_0\le n} A_n)=0$, as desired, provided \eqref{Ineq} holds.
Finally we show \eqref{Ineq} below using the property of conditional expectation, and inequality \eqref{inequlity1}
\begin{align*}\label{probability}
E\left[\prod_{n_0\le n\le l} \mathds{1}_{A_n}\right]
      & = E\left[E\left[\prod_{n_0\le n\le l} \mathds{1}_{A_n}\Big |\; T_{l-1}, X_{T_{l-1}}, \ldots, T_0,X_0\right]\right] \\
      &=E\left[\left(\prod_{n_0\le n\le l-1} \mathds{1}_{A_n}\right)E\left[\mathds{1}_{A_l}\Big | \;T_{l-1}, X_{T_{l-1}}, \ldots, T_0,X_0\right]\right]\\
      &\le \left(1-e^{-\epsilon c}\right) E\left[\prod_{n_0\le n\le l-1} \mathds{1}_{A_n}\right]
    \end{align*}
for all $l\ge n_0$. Hence the proof is complete.
\qed
\end{proof}
\noindent The above theorem essentially asserts that the jump process $X$ is pure. In the next section we show that $X$ is a semi-Markov process. \section{Law of the Solution}
\subsection{Semi-Markovity}
\begin{theo}\label{theo3.2}
Let $Z= (X,Y,N)=\{(X_{t},\ Y_{t},N_{t})\}_{t\geq 0}$ be the unique strong solution to  \eqref{eqX}-\eqref{eq_N(t)}. Then the following hold.
\begin{enumerate}[i.]
\item The process $\{X_{t}\}_{t\geq0}$ is a pure SMP.
\item The embedded chain $\{X_{T_n}\}_{n\ge 1}$ is a discrete time Markov chain.
\end{enumerate}
\end{theo}
\begin{proof}
First we will prove that $X$ is SMP. We have already seen in the proof of Theorem \ref{theo3.1}, that $X$ is an r.c.l.l. process. Next, we need to show \eqref{defi_semimarkov}, i.e., for each $n \in \mathbb{N}$, $y\in \mathbb{R}_0$, $j\in \mathcal{X}$
   \begin{equation}\label{semi}
       P[X_{T_{n+1}}=j,T_{n+1}-T_{n}\leq y\mid X_{0},T_{0}, X_{T_{1}},T_{1} ,\ldots, X_{T_{n}},T_{n}]=P[X_{T_{n+1}}=j,T_{n+1}-T_{n}\leq y \mid X_{T_{n}}].
   \end{equation}	
We note that the left side is equal to
\begin{eqnarray} \label{note 2}
\no && P(T_{n+1}-T_n\leq y\mid (X_0,{T_0}),(X_{T_1},{T_1}),\ldots,(X_{T_n},{T_n}))\\
	&& \times P(X_{T_{n+1}}=j\mid (X_0,{T_0}),(X_{T_1},{T_1}),\ldots,(X_{T_n},{T_n}),\{ T_{n+1}-T_n\leq y \}).
\end{eqnarray}

\noi Each of the two conditional probabilities is further simplified below. For almost every $ \omega\in\Omega $, Equation \eqref{intg} and Theorem \ref{theo3.1} part (3)(iii)-(iv) imply that for any $n\ge 1$
\begin{equation*}
	\int_{T_n+}^{T_n+t}(u-T_n)\int_{\mathbb{R}}g_{\Lambda}(X_{T_n},u-T_n,n,v)\,\wp(du,dv)=\begin{cases}
	0, &\text{ for } t<T_{n+1}-T_n\\
	T_{n+1}-T_n, & \text{ for } t= T_{n+1}-T_n.
	\end{cases}
\end{equation*}
\noi Hence, by a suitable change of variable, almost surely
$T_{n+1}-T_n $ is the first occurrence of a non-zero value of the following map
\begin{equation*}
	t\mapsto\int_{0+}^{t} u \int_{\mathbb{R}}g_{\Lambda}(X_{T_n},u,n,v)\,\wp(T_n+du,dv)
\end{equation*}
and that occurs at $t=T_{n+1}-T_n$. Again, since $ \wp(T_n+du,dv) $ is independent to $ \mathcal{F}_{T_n}$ we obtain, $T_{n+1}-T_n$ is conditionally independent to $ \mathcal{F}_{T_n}$ given $X_{T_n}$. Thus for all $n\ge 1$
\begin{eqnarray}\label{note 4}
&&P\left(T_{n+1}-T_n\leq y\mid (X_0,{T_0}),(X_{T_1},{T_1}),\dots,(X_{T_n},{T_n})\right) = P(T_{n+1}-T_n\leq y\mid X_{T_n}).
\end{eqnarray}
\noi By substituting $t$ equal to $T_{n}$ and $T_{n+1}$ in Equation \eqref{eqX}, and using Theorem \ref{theo3.1} part (3) (ii)-(iii) we get
\begin{align}\label{note Xt}
	X_{T_{n+1}}=X_{T_n}+\int_{\mathbb{R}}h_{\Lambda}(X_{T_n},T_{n+1}-T_n,n,v)\,\wp(\{T_{n+1}\}\times dv),
\end{align}
as $X_{T_{n+1}-}=X_{T_{n}}$, $ Y_{T_{n+1}-}=T_{n+1}-T_n $ and $N_{T_{n+1}-}=n$. Thus using \eqref{note Xt}
\begin{align*}
P\left(X_{T_{n+1}}=j\mid (X_0,{T_0}),(X_{T_1},{T_1}),\ldots,(X_{T_n},{T_n}),\{ T_{n+1}-T_n\leq y \}\right)&\\
	= \no P\bigg( \int_{\mathbb{R}}h_{\Lambda}(X_{T_n},T_{n+1}-T_n,n,v)\,\wp(\{T_n+(T_{n+1}-T_n)\}\times dv) =&j-X_{T_n}\Big|\\
	 \no (X_0,{T_0}),(X_{T_1},{T_1}),\ldots,(X_{T_n},{T_n}),&\{ T_{n+1}-T_n\leq y \}   \bigg).
\end{align*}
Again, using the independence of $\wp(T_n+du,dv)$ to $ \mathcal{F}_{T_n}$ and conditional independence of $T_{n+1}-T_n$ to $\mathcal{F}_{T_n}$ given $X_{T_n}$ we conclude, the right side expression is equal to
\begin{align}\label{note 3a}
	& \no P\left(\int_{\mathbb{R}}h_{\Lambda}(X_{T_n},T_{n+1}-T_n,n,v)\,\wp(\{T_n+(T_{n+1}-T_n)\}\times dv)=j-X_{T_n} ~\Big|~ X_{T_n}, \{ T_{n+1}-T_n\leq y \}  \right)
\end{align}
which is again equal to $ P\left( X_{T_{n+1}}=j\mid X_{T_n},\{ T_{n+1}-T_n\leq y \} \right)$ using \eqref{note Xt}. Thus, using this simplification and \eqref{note 4} in \eqref{note 2}, we obtain for all $n\ge 1$
\begin{align*}
	&P(X_{T_{n+1}}=j,T_{n+1}-T_n\leq y\mid (X_0,{T_0}),(X_{T_1},{T_1}),\ldots,(X_{T_n},{T_n}))\\
	&= P\left(T_{n+1}-T_n\leq y\mid X_{T_n}\right) P\left(X_{T_{n+1}}=j\mid X_{T_n},\{ T_{n+1}-T_n\leq y \}\right)\\
	&= P\left(X_{T_{n+1}}=j,T_{n+1}-T_n\leq y\mid X_{T_n} \right).
 \end{align*}
\noi Hence, part $(i)$ is proved. \\
For every $n\ge 1$, taking $y\to \infty$ on both sides in Equation \eqref{semi}, we get
$$P[X_{T_{n+1}}=j\mid X_{T_{0}},T_{0}, X_{T_{1}},T_{1} ,\ldots, X_{T_{n}},T_{n}]=P[X_{T_{n+1}}=j \mid X_{T_{n}}].$$
Hence the part $(ii)$.
\qed
\end{proof}

\subsection{Transition Parameters}
\noi For each $i \in \mathcal{X}, n\in \mathbb{N}_{0}$, we define a function $F(\cdot\mid i,n)\colon [0,\infty)\rightarrow [0,1]$ as
   	\begin{equation}\label{distribution}
   	    F(y\mid i,n):=1-e^{-\gamma_{i}(y,n)}
   	\end{equation}
where $\gamma_{i}(y,n)$ is as in (A2).
Clearly, $F(\cdot\mid i,n)$ is differentiable almost everywhere. To see this, we note that $\gamma_{i}(y,n)$ is an integral of a bounded Lebesgue measurable function, and thus is absolutely continuous in $y$. Let $ f(y\mid i,n)$ be the almost everywhere derivative of $F(y\mid i,n)$. We also define for each $y\in \mathbb{R}_{+}$, $n\in \mathbb{N}_0$, a matrix $p(y,n):=(p_{ij}(y,n))_{\mathcal{X}\times \mathcal{X}}$, such that
	\begin{equation}\label{p_ij}
		p_{ij}(y,n):=\begin{cases}
		\dfrac{\lambda_{ij}(y,n)}{\lambda_{i}(y,n)}\mathds{1}_{(0,\infty)}(\lambda_{i}(y,n)),&\textrm{ if } j\neq i\\
		\mathds{1}_{\{0\}}(\lambda_{i}(y,n)), 	&\textrm{ if }  j=i.
	\end{cases}
	\end{equation}
Since, $\lambda_{ij}(y,n)\ge 0$ and $\sum_{j\in \mathcal{X} \setminus\{i\}} \lambda_{ij}(y,n) = \lambda_{i}(y,n)$, for each $n\in \mathbb{N}_0$ and $y\in \mathbb{R}_{+}$, $p(y,n)$ is a transition probability matrix.
The following proposition asserts that $p(y,n)$ gives the conditional probability of selecting a state at the time of $n+1$\textsuperscript{st} transition given the age $y$ and location $i$ of the previous state. Furthermore, the map $F(\cdot\mid i,n)$ as in \eqref{distribution} is also asserted as the conditional cumulative distribution function of the holding time at the $n$th state given that is $i$.
\begin{theo}\label{prop3.5} Let $Z=(X,Y,N)$ be the solution to \eqref{eqX}-\eqref{eq_N(t)}, $i\in \mathcal{X}$, $y\in \mathbb{R}_{+}$ and $n\ge 1$ then the following hold.
\begin{enumerate}[i.]
\item $F(\cdot\mid i,n)$ is the conditional cumulative distribution function of the holding time of the process $X$.
\item For all $j\neq X_{T_n}$, $p_{X_{T_n}j}(T_{n+1}-T_{n},n)=P[X_{T_{n+1}}=j\mid X_{T_n}, T_{n+1}-T_{n}]$ almost surely.
\end{enumerate}
	\end{theo}

\begin{proof}
We recall that, the intensity of $\wp$ is Lebesgue measure, and the Lebesgue measure of $\{(u,v)\in (T_n,T_n+y) \times \mathbb{R}_+\mid v\in \Lambda_{i}(u-T_{n},n)\}$ is $\int_{T_n}^{T_n+ y}\lambda_{i}(u-T_n,n) du$ which is equal to $\gamma_{i}(y,n)$ (see (A2)). Using \eqref{g} and \eqref{intg}, the conditional probability of no transition in the next $y$ unit time, given that the $n\textsuperscript{th}$ transition happens now to state $i,$ is given by $e-{}\gamma_i(y,n)$
	Using the above fact, the conditional cumulative distribution function at $y$ of the holding time after the $n$\textsuperscript{th} transition, given the state is $i$, is	
	\begin{align}
		    P[T_{n+1}-T_{n}\leq y\mid X_{T_n}=i]&=1-P[X_{t}=X_{t-},\forall t\in(T_n,T_n+y]\mid X_{T_n}=i]\nonumber\\
		    &= 1-e^{-\gamma_{i}(y,n)}\nonumber
		\end{align}
		for all $y\in \mathbb{R}_{+}$ and $i\in \mathcal{X}.$ Thus (i) follows from \eqref{distribution}.

\noi We note that, for $j\neq i$, $ P[X_{T_{n+1}}=j\mid X_{T_n}=i, T_{n+1}-T_{n}=y] $ is the conditional probability of the event that the $n+1$\textsuperscript{st} state is $j$, given that $T_{n+1}=T_n +y$ and the $n$th state is $i$. Using \eqref{note Xt}, the above is the conditional probability that a Poisson point mass appears in $\{T_{n} +y\}\times\Lambda_{ij}(y,n)$ given that the point mass lies somewhere in $\{T_{n} +y\}\times \Lambda_{i}(y,n)$ and no transition of $X$ occurs during $(T_{n},T_{n}+y)$. If these three events are denoted by $A$, $B$, and $C$ respectively, then the conditional probability $P(A\mid B\cap C)$ can be simplified as $P(A\mid B)$ because $C$ is independent to both $A$ and $B$. Thus using the Lebesgue intensity of $\wp$,
\begin{align*}
P\left[X_{T_{n+1}}=j\mid X_{T_n}=i, T_{n+1}-T_{n}=y\right]=& P\left[\wp(\{T_{n} +y\}\times\Lambda_{ij}(y,n))=1 \mid \wp(\{T_{n} +y\}\times \Lambda_{i}(y,n))=1\right]\\
=&\dfrac{|\Lambda_{ij}(y,n)|}{|\Lambda_{i}(y,n)|}
\\
=& \dfrac{\lambda_{ij}(y,n)}{\lambda_{i}(y,n)}
\end{align*} for every $y\in \mathbb{R}_{+}$, and $j\neq i$, provided $\lambda_{i}(y,n)\neq 0$. Thus from \eqref{p_ij}, we get $ p_{ij}(y,n)$ is equal to\\ $P\left[X_{T_{n+1}}=j\mid X_{T_n}=i, T_{n+1}-T_{n}=y\right] $ when $\lambda_{i}(y,n)\neq 0$. Next we note that $\lambda_{i}(y,n)= 0$ if and only if \\$\frac{d}{dy} F(y\mid i,n) =0$, i.e., the density of $T_{n+1}-T_n$ is zero at $y$. Hence (ii) holds.
\qed
\end{proof}
\begin{rem}
\noi Using \eqref{distribution}, we note that under Assumptions (A1) and (A2), $ F(y\mid i,n)<1 $ for all $y\in \mathbb{R}_{+}$ and $ {\lim}_{y\rightarrow\infty} F(y\mid i,n)=1$ using \eqref{distribution}. Thus, the holding times are unbounded but finite almost surely. By dropping (A1), one may include a class of SMPs having bounded holding times. However, we exclude that class from our discussion. It is also important to note that the SMPs having discontinuous cdf of holding times are also not considered in the present setting. Nevertheless, the present study subsumes countable-state continuous time Markov chains and the processes having age dependent transitions as appear in \cite{GhoshSaha2011, tanmay}.
\end{rem}

\begin{prop}\label{prop3.6}
We have, for almost every $ y\geq 0 $ and $n\ge 1$,
\begin{equation*}
p_{ij}(y,n)\dfrac{f(y\mid i,n)}{1-F(y\mid i,n)}=\begin{cases}
\lambda_{ij}(y,n),&\textrm{ for } i\neq j,\\
0,&\textrm{ for } i=j.
		\end{cases}
		\end{equation*}
\end{prop}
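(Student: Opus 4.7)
The strategy is a direct computation from the definitions; there is no probabilistic input at this stage beyond what has already been established.

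First I would unpack the ratio $f(y\mid i,n)/(1-F(y\mid i,n))$. Since $F(y\mid i,n)=1-e^{-\gamma_i(y,n)}$ and $\gamma_i(\cdot,n)$ is the integral of the bounded measurable function $\lambda_i(\cdot,n)$, it is absolutely continuous, so for almost every $y\ge 0$ we have $\gamma_i'(y,n)=\lambda_i(y,n)$. By the chain rule at such $y$,
\begin{equation*}
f(y\mid i,n)=\lambda_i(y,n)e^{-\gamma_i(y,n)},\qquad 1-F(y\mid i,n)=e^{-\gamma_i(y,n)},
\end{equation*}
so $f(y\mid i,n)/(1-F(y\mid i,n))=\lambda_i(y,n)$ for a.e.\ $y$.

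Next I would handle the two cases. For $i\ne j$, multiplying by $p_{ij}(y,n)$ as defined in \eqref{p_ij} gives
\begin{equation*}
p_{ij}(y,n)\,\lambda_i(y,n)=\lambda_{ij}(y,n)\mathds{1}_{(0,\infty)}(\lambda_i(y,n)).
\end{equation*}
On $\{\lambda_i(y,n)>0\}$ the indicator is $1$ and equality with $\lambda_{ij}(y,n)$ is immediate. On $\{\lambda_i(y,n)=0\}$ we use $0\le \lambda_{ij}(y,n)\le \sum_{k\ne i}\lambda_{ik}(y,n)=\lambda_i(y,n)=0$, so both sides vanish. For $i=j$, $p_{ii}(y,n)=\mathds{1}_{\{0\}}(\lambda_i(y,n))$, hence $p_{ii}(y,n)\lambda_i(y,n)=0$ for every $y$, matching the claimed value $0$.

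There is no real obstacle here: the only subtlety is bookkeeping the null set $\{\lambda_i(y,n)=0\}$ where the quotient $\lambda_{ij}/\lambda_i$ is not literally defined, and this is resolved by the nonnegativity bound $\lambda_{ij}\le \lambda_i$ coming from the definition of $\lambda_i$ in the preamble. Combining the two cases yields the stated identity for a.e.\ $y\ge 0$ and every $n\ge 1$.
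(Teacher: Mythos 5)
Your proposal is correct and follows essentially the same route as the paper: differentiate \eqref{distribution} to get $f(y\mid i,n)/(1-F(y\mid i,n))=\lambda_i(y,n)$ a.e., then split into the cases $i\neq j$ and $i=j$ using \eqref{p_ij} and the bound $0\le\lambda_{ij}\le\lambda_i$. (One small quibble: the set $\{\lambda_i(y,n)=0\}$ need not be a Lebesgue-null set, but your argument does not require that, since both sides vanish there.)
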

\begin{proof}
By differentiating both sides of  \eqref{distribution}, we obtain $f(y\mid i,n)=\lambda_{i}(y,n)e^{-\gamma_i(y,n)}$ for every $y\in \mathbb{R}_{+}$. This is equal to $\lambda_{i}(y,n)(1-F(y\mid i,n))$ using \eqref{distribution}. Hence, for every $y\in \mathbb{R}_{+}$, $n\ge 1$ and $i\in \mathcal{X}$
\begin{align}
\frac{f(y\mid i,n)}{1-F(y\mid i,n)}&=\lambda_{i}(y,n)\label{lamb ii}.
\end{align}
If $ i\neq j $, for every $y\in \mathbb{R}_{+}$, using \eqref{p_ij}
\begin{align}
\no p_{ij}(y,n)\dfrac{f(y\mid i,n)}{1-F(y\mid i,n)} =& \dfrac{\lambda_{ij}(y,n)}{\lambda_{i}(y,n)}\times\lambda_{i}(y,n) \mathds{1}_{(0,\infty)}(\lambda_{i}(y,n))
= \lambda_{ij}(y,n)
\end{align}
as $0\le \lambda_{ij}(y,n) \le \lambda_{i}(y,n)$.
The case for $i=j$ follows from \eqref{p_ij} and \eqref{lamb ii} directly as $p_{ii}(y,n)\dfrac{f(y\mid i,n)}{1-F(y\mid i,n)}$ is equal to $\lambda_{i}(y,n) \mathds{1}_{\{0\}}(\lambda_{i}(y,n))$ which is zero. \qed
\end{proof}

\begin{theo} \label{theoQ}
		Let $X$ be a SMP as in Theorem \ref{theo3.2}. Then, the associated kernel is given by
	\[P[X_{T_{n+1}}=j, T_{n+1}-T_{n}\leq y \mid  X_{T_n}=i]=\int_{0}^{y}e^{-\gamma_{i}(s,n)}\lambda_{ij}(s,n)\ ds,\] which is denoted by $Q_{ij}(y,n)$ for  every $y>0$, $n\ge 1$ and $i\neq j.$
\end{theo}
	\begin{proof}
		Using Theorem \ref{prop3.5} (i) and (ii) and Theorem \ref{theo3.1} part (3) (ii)
\begin{align*}
&P\left[X_{T_{n+1}}=j, T_{n+1}-T_{n}\leq y \mid  X_{T_n}=i\right]\\
&=\mathbb{E}\left[P\left(X_{T_{n+1}}=j, T_{n+1}-T_{n}\leq y\mid X_{T_n}=i,T_{n+1}-T_{n}\right)\mid X_{T_n}=i\right]\\
&=\int_{0}^{\infty}\mathds{1}_{[0,y]}(s)P\left[X_{T_{n+1}}=j\mid X_{T_n}=i,T_{n+1}-T_{n} =s\right]f(s\mid i,n)\,ds\\
&=\int_0^y p_{ij}(s,n)f(s\mid i,n)\,ds.
\end{align*}
For each $i\neq j$, using Proposition \ref{prop3.6} and \eqref{distribution}, the right side of above can be rewritten as
\begin{align*}
&\int_0^y (1-F(s\mid i,n))\lambda_{ij}(s,n)\,ds\ =\int_{0}^{y}e^{-\gamma_{i}(s,n)}\lambda_{ij}(s,n)\,ds = Q_{ij}(y,n).
\end{align*}
\qed
\end{proof}
\begin{prop}
Let $X$ be a SMP as in Theorem \ref{theo3.2}. Then, $\lambda(n,y)$ is the instantaneous transition rate matrix.
\end{prop}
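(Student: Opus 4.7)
The plan is to unpack the definition of the instantaneous transition rate matrix and verify that, given the conditional distributions already derived, each off-diagonal entry $\lambda_{ij}(y,n)$ emerges as the instantaneous rate of transition from state $i$ to state $j$ when the age is $y$ and the transition count is $n$, while the diagonal entry $-\lambda_i(y,n)$ emerges as minus the total rate of leaving state $i$. Concretely, for $i\neq j$ I would work with the quantity
\[
r_{ij}(y,n):=\lim_{h\to 0^+}\frac{1}{h}\,P\bigl[X_{T_{n+1}}=j,\ T_{n+1}-T_n\in(y,y+h]\ \bigm|\ X_{T_n}=i,\ T_{n+1}-T_n>y\bigr],
\]
which is the standard definition of an age-dependent intensity for a semi-Markov jump.

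The main computation is then essentially one line. First I would split the conditional probability as a ratio: the numerator is $Q_{ij}(y+h,n)-Q_{ij}(y,n)$ and the denominator is $P[T_{n+1}-T_n>y\mid X_{T_n}=i]=1-F(y\mid i,n)=e^{-\gamma_i(y,n)}$, where I invoke Theorem \ref{prop3.5}(i). Next, by Theorem \ref{theoQ}, the numerator equals $\int_y^{y+h}e^{-\gamma_i(s,n)}\lambda_{ij}(s,n)\,ds$, so by the Lebesgue differentiation theorem the limit exists for almost every $y$ and equals $e^{-\gamma_i(y,n)}\lambda_{ij}(y,n)$. Dividing by the survival probability cancels the exponential and yields $r_{ij}(y,n)=\lambda_{ij}(y,n)$ for almost every $y\ge 0$; alternatively one can reach the same conclusion by combining Proposition \ref{prop3.6} with Theorem \ref{prop3.5}(ii) to write $\lambda_{ij}(y,n)=p_{ij}(y,n)\,f(y\mid i,n)/(1-F(y\mid i,n))$ and recognising the right-hand side as the product of the branching probability and the hazard rate of the holding time.

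For the diagonal entries it suffices to observe that $\lambda_{ii}(y,n)=-\lambda_i(y,n)=-\sum_{j\neq i}\lambda_{ij}(y,n)$ by construction, and that the analogous limit
\[
\lim_{h\to 0^+}\frac{1}{h}\,P\bigl[X_{T_{n+1}}\neq i,\ T_{n+1}-T_n\in(y,y+h]\ \bigm|\ X_{T_n}=i,\ T_{n+1}-T_n>y\bigr]
\]
equals $\sum_{j\neq i}\lambda_{ij}(y,n)=\lambda_i(y,n)$ by summing the off-diagonal identity over $j$, so that the net rate of remaining at $i$ in the next instant is indeed $-\lambda_i(y,n)$. There is no substantive analytic obstacle; the only mildly delicate point is that $Q_{ij}(\cdot,n)$ is differentiable only almost everywhere (it is an integral of a bounded measurable function), so the equality holds for a.e.\ $y$ rather than every $y$, which is consistent with the ``for almost every'' clause already used throughout the paper.
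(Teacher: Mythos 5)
Your proposal is correct and follows essentially the same route as the paper: compute the age-dependent hazard limit, identify the numerator with an increment of $Q_{ij}(\cdot,n)$ via Theorem \ref{theoQ} and the denominator with $1-F(y\mid i,n)=e^{-\gamma_i(y,n)}$, and cancel the exponential to obtain $\lambda_{ij}(y,n)$. Your additional remarks on the almost-everywhere differentiability and on the diagonal entries are finer points the paper's proof leaves implicit, but they do not change the argument.
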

\proof
The rate of transition from state $i$ to $j$ at age $y$ is given by
\begin{align*}
    &\lim_{h\to 0}\frac{1}{h}\left[P(X_{T_{n+1}}=j,T_{n+1}-T_n\in(y,y+h]\mid X_{T_{n}}=i,\{T_{n+1}-T_n>y\})\right]\\
    & =\lim_{h\to 0}\frac{1}{h}\frac{P(X_{T_{n+1}}=j,T_{n+1}-T_n\in(y,y+h]\mid X_{T_{n}}=i)}{P(T_{n+1}-T_n>y\mid X_{T_n}=i)}\\
    & =\lim_{h\to 0}\frac{1}{h}\frac{P(X_{T_{n+1}}=j,T_{n+1}-T_n \le y+h\mid X_{T_{n}}=i)- P(X_{T_{n+1}}=j,T_{n+1}-T_n \le y\mid X_{T_{n}}=i) }{1- P(T_{n+1}-T_n \le y\mid X_{T_n}=i)}.
\end{align*}
Using Theorem \ref{theoQ}, the above limit is equal to $\frac{\frac{d}{dy}Q_{ij}(y,n)}{1-F(y\mid i,n)}$ which can further be simplified as $\lambda_{ij}(y,n)$. \qed
\begin{rem}
We have obtained $Q_{ij}(y,n)=\int_0^y p_{ij}(s,n)f(s\mid i,n)\,ds$ in the proof of Theorem \ref{theoQ}, which expresses $Q_{ij}(\cdot,n)$ in terms of the $p_{ij}(\cdot,n)$, and $f(\cdot\mid i,n)$. These parameters give the age dependent transition probabilities and the conditional holding time densities. In an alternative conditioning, the kernel can also be expressed as $Q_{ij}(y,n) = P\left[X_{T_{n+1}}=j \mid  X_{T_n}=i\right] P\left[ T_{n+1}-T_n \leq y \mid  X_{T_n}=i, X_{T_{n+1}}=j \right]$, which is the product of transition probabilities of embedded chain and the conditional cdf of holding time given the current and the next states. Generally an SMP is characterized using the transition kernel $Q$. Although, instantaneous transition rate matrix $\lambda$ also characterizes an SMP, $Q$ is considered more fundamental as $\lambda$ exists only if $Q$ is differentiable. In that case, each of $Q$ and $\lambda$ can be expressed in terms of another, which is evident from the above two results.
\end{rem}

\section{Component-wise Semi-Markov Process}
\begin{defi}
A pure jump process $X$ on a countable state space $\mathcal{X}$ is called a component-wise Semi-Markov Process (CSM) if there is a bijection $\Gamma : \mathcal{X}\to \prod_{i=1}^{d} \mathcal{X}_{i}$, such that each component of $\Gamma(X)$ is a semi-Markov process, where $d$ is a positive integer and for each $i\le d$, $\mathcal{X}_{i}$ is an at-most countable non-empty set.
\end{defi}
\noindent Without loss of generality, we assume that $\mathcal{X}= \prod_{i=1}^d \mathcal{X}_i$ and $X^i$ is a semi-Markov process on $\mathcal{X}_i$  for each $i\le d$. Here $\Gamma$ is the identity map. Next we consider a specific CSM process of dimension $2$.
\begin{notation}\label{N4.1}
Fix $i,j\in \mathcal{X}$ and $y_1,y_2\geq 0$. Let $Z^1=(X^1,Y^1,N^1)$ and $Z^2=(X^2,Y^2,N^2)$ be the strong solutions of \eqref{eqX}-\eqref{eq_N(t)} with two different initial conditions.
At a fixed time $s(>0)$ we denote
\begin{equation}
\label{initialcondition1}
    i=X^1_s,y_1=Y^1_s,n_1=N^1_s
\end{equation}
 and
\begin{equation}
\label{initialcondition2}
     j=X_s^2,y_2=Y_s^2,n_2=N^2_s
\end{equation}
respectively. We also denote $(Z^1,Z^2)$ as $Z$.
\end{notation}
\noindent Throughout this part, we will use the notation described above. For the sake of computing some specific parameters connected to the law of Z, we limit ourselves to the following choice of $\tilde{\lambda}$.
\begin{enumerate}
\item[{\bf (A3)}.] For all $y\in \mathbb{R}_{+}$, $n\in\mathbb{N}_{0}$ and $(i,j) \in \mathcal{X}_2$, $\tilde{\lambda}_{ij}(y,n)=\max(\|\lambda_{ij}(\cdot,\cdot)\|_{L^{\infty}_{(\mathbb{R}_+\times\mathbb{N}_0)}}, \lambda_{ij}(y,n))$.
\end{enumerate}

\noindent Since, $Z^1$ and $Z^2$ as in Notation \ref{N4.1} are Markov, $Z=(Z^1,Z^2)$ is also Markov. It has state, component-wise age and component-wise number of transitions $X=(X^1, X^2)$, $Y=(Y^1,Y^2)$ and $N=(N^1,N^2)$ respectively. While each of $X^1$ and $X^2$ is semi-Markov, the pure jump process $X$ is not. Rather, $X$ is a component-wise semi-Markov process (CSM) and the Markov process $Z$ is called the augmented process of CSM $X$. Since, for our case, the components of the CSM $X$ are driven by a single Poisson random measure, they are not independent. In view of this, it is interesting to derive the law of $X$ by finding the the infinitesimal generator $\mathcal{A}$ of the augmented process $Z=(Z^1, Z^2)$. Under (A3), $\mathcal{A}$ is obtained below using It\^{o}'s lemma for r.c.l.l. semi-martingales. Let $\varphi\colon (\mathcal{X}\times \mathbb{R}_+\times\mathbb{N}_{0})^2\to \mathbb{R}$ be bounded and continuously differentiable in its continuous variables. Then using \eqref{vectorform}
\begin{align*}
&d\varphi(Z^1_t,Z^2_t)- \left(\frac{\partial}{\partial y_1}+\frac{\partial}{\partial y_2}\right)\varphi(Z^1_t,Z^2_t)dt\\
&=\varphi(Z^1_t,Z^2_t)-\varphi(Z^1_{t-},Z^2_{t-})\\
&=\varphi\left(Z^1_{t-}+\int_{\mathbb{R}_{+}}J(Z^1_{t-},v)\wp(dt,dv),Z^2_{t-}+\int_{\mathbb{R}_{+}}J(Z^2_{t-},v)\wp(dt,dv)\right)-\varphi(Z^1_{t-},Z^2_{t-})\\
&=\int_{\mathbb{R}_+}\left[\varphi(Z^1_{t-}+J(Z^1_{t-},v),Z^2_{t-}+J(Z^2_{t-},v))-\varphi(Z^1_{t-},Z^2_{t-})\right]\wp(dt,dv)\\
&= \left(\int_{\mathbb{R}_+}\left[\varphi(Z^1_{t-}+J(Z^1_{t-},v),Z^2_{t-}+J(Z^2_{t-},v))-\varphi(Z^1_{t-},Z^2_{t-})\right]dv\right)dt+dM_t
    \end{align*}
where $M$ is the martingale obtained by integration wrt the compensated Poisson random measure $\wp(dt,dv) -dtdv$.
For simplifying the above integral term, we impose (A3) and divide the derivation in two complementary cases.

\noindent Case 1: Assume $X^1_{t-}\neq X^2_{t-}$. Now under (A3), the intervals $\Lambda_{X^1_{t-}j_{1}}(Y^1_{t-},N^1_{t-})$ and $\Lambda_{X^2_{t-}j_{2}}(Y^2_{t-},N^2_{t-})$ are disjoint for every $j_1\in \mathcal{X}\setminus\{X^1_{t-}\},j_2\in \mathcal{X}\setminus\{X^2_{t-}\}$ and $N^1_{t-},N^2_{t-}$. Thus by considering these intervals where the integrand is non-zero constants, we get
\begin{align*}
&\int_{\mathbb{R}_+}\left[\varphi(Z^1_{t-}+J(Z^1_{t-},v),Z^2_{t-}+J(Z^2_{t-},v))-\varphi(Z^1_{t-},Z^2_{t-})\right]dv\\
&=\int_{\overset{2}{\underset{k=1}{\bigcup}}\left(\underset{j\in \mathcal{X}\setminus\{X^k_{t-}\}}{\cup}\Lambda_{X^k_{t-}j}(Y^k_{t-},N^k_{t-})\right)}\left[\varphi(Z^1_{t-}+J(Z^1_{t-},v),Z^2_{t-}+J(Z^2_{t-},v))-\varphi(Z^1_{t-},Z^2_{t-})\right]dv\\
&=\underset{j\in \mathcal{X}\setminus\{X^1_{t-}\}}{\sum} \left[\varphi(j,0,N^1_{t-} +1,Z^2_{t-}) -\varphi(Z^1_{t-},Z^2_{t-})\right] |\Lambda_{X^1_{t-}j}(Y^1_{t-},N^1_{t-})|
\\&\;\;\;\;\;+\underset{j\in \mathcal{X}\setminus\{X^2_{t-}\}}{\sum}\left[\varphi(Z^1_{t-},j,0,N^2_{t-} +1) -\varphi(Z^1_{t-},Z^2_{t-})\right]|\Lambda_{X^2_{t-}j}(Y^2_{t-},N^2_{t-})|
\end{align*}
where $|I|$ is the length of the interval $I$.

\noindent Case 2: Assume that $X^1_{t-} = X^2_{t-}=i$ say. Also recall that under (A3), the intervals  $\Lambda_{ij}(y_1,n_1)$ and $\Lambda_{ij}(y_2,n_2)$ are having identical left end points for almost every $y_1,y_2\geq 0$ and $n_1,n_2\in \mathbb{N}_0$. So, $\Lambda_{X^1_{t-}j_{1}}(Y^1_{t-},N^1_{t-})$ and $\Lambda_{X^2_{t-}j_{2}}(Y^2_{t-},N^2_{t-})$ are not disjoint when $j_1= j_2$. Therefore,
\begin{align*}
&\int_{\mathbb{R}_+}\left[\varphi(Z^1_{t-}+J(Z^1_{t-},v),Z^2_{t-}+J(Z^2_{t-},v))-\varphi(Z^1_{t-},Z^2_{t-})\right]dv\\
& = \int_{\underset{\substack{j\in \mathcal{X}\setminus\{i\}}}{\cup}\left(\Lambda_{ij}(Y^1_{t-},N^1_{t-})\cup\Lambda_{{ij}}(Y^2_{t-},N^2_{t-})\right)}[\varphi(Z^1_{t-}+J(Z^1_{t-},v),Z^2_{t-}+ J(Z^2_{t-},v))-\varphi(Z^1_{t-},Z^2_{t-})]dv\\
&= \underset{j\in \mathcal{X}\setminus\{i\}}{\sum} [\varphi(j,0,N^1_{t-} +1,Z^2_{t-})-\varphi(Z^1_{t-},Z^2_{t-})]|\Lambda_{ij}(Y^1_{t-},N^1_{t-})\setminus\Lambda_{ij}(Y^2_{t-},N^2_{t-})|\\
&\;\; \;\;  +\underset{j\in \mathcal{X}\setminus\{i\}}{\sum} [\varphi(Z^1_{t-},j,0,N^2_{t-} +1)-\varphi(Z^1_{t-},Z^2_{t-})]|\Lambda_{ij}(Y^2_{t-},N^2_{t-})\setminus \Lambda_{ij}(Y^1_{t-},N^1_{t-})|\\
&\;\; \;\;  +\underset{j\in \mathcal{X}\setminus\{i\}}{\sum}[\varphi(j,0,N^1_{t-} +1,j,0,N^2_{t-} +1,)-\varphi(Z^1_{t-},Z^2_{t-})]|\Lambda_{ij}(Y^1_{t-},N^1_{t-})\cap\Lambda_{ij}(Y^2_{t-},N^2_{t-})|.
\end{align*}
Hence by combining the expressions under both the cases,
\begin{align} \label{A}
\nonumber \mathcal{A}\varphi(z_1,z_2)
&:=  \left(\frac{\partial}{\partial y_1}+\frac{\partial}{\partial y_2}\right)\varphi(z_1,z_2) + \int_{\mathbb{R}_+}\left[\varphi(z_1+J(z_1,v),z_2+J(z_2,v))-\varphi(z_1,z_2)\right]dv \\
\nonumber &= \left(\frac{\partial}{\partial y_1}+\frac{\partial}{\partial y_2}\right)\varphi(z_1,z_2) + \underset{j\in \mathcal{X}\setminus\{i_1\}}{\sum}(\lambda_{i_1j}(y_1,n_1)-\delta_{i_1i_2}\lambda_{i_2j}(y_2,n_2))^{+}[\varphi(j,0,n_1+1,z_2)-\varphi(z_1,z_2)]\\
\nonumber &\;\; \;\; +\underset{j\in \mathcal{X}\setminus\{i_2\}}{\sum}(\lambda_{i_2j}(y_2,n_2)-\delta_{i_1,i_2}\lambda_{i_1j}(y_1,n_1))^+ [\varphi(z_1,j,0,n_2+1)-\varphi(z_1,z_2)]\\ &\;\; \;\;+\delta_{i_1,i_2}\underset{j\in \mathcal{X}\setminus\{i_1,i_2\}}{\sum}(\lambda_{i_1j}(y_1,n_1)\wedge\lambda_{i_2j}(y_2,n_2))[\varphi(j,0,n_1+1,j,0,n_2+1)-\varphi(z_1,z_2)]
\end{align}
where $z_1=(i_1,y_1,n_1)$, $z_2=(i_2,y_2,n_2)$, $\delta_{ij}$ is Kronecker delta, $a^+=\max(0,a)$ and $a\wedge b =\min(a,b)$. Thus we have proved the following theorem.
\begin{theo}
Under (A3), the infinitesimal generator $\mathcal{A}$ of the augmented process $Z=(Z^1, Z^2)$ is given by \eqref{A} where $Z^1=(X^1,Y^1,N^1)$ and $Z^2=(X^2,Y^2,N^2)$ are as in Notation \ref{N4.1}.
\end{theo}
\section{Conclusion}
We study strong solutions of a two-dimensional jump SDE driven by a Poisson random measure with Lebesgue intensity. The coefficients are chosen depending on a given transition rate function and an additional gaping parameter. Since, the coefficients are not compactly supported and also the intensity measure is not finite, the existence result is not straightforward. We have first proved the local existence and then established the global existence of a unique strong solution.
We then show that the state component of the solution is a pure semi-Markov process with the given transition rate function and the other component is the age process. Although the law of a single solution does not depend on the additional gaping parameter, the joint distribution of a couple of solutions with different initial conditions does depend on that. Under a simplified assumption on the gaping parameter, we derive the law of the bivariate process by calculating the infinitesimal generator of the augmented process.

To the best of our knowledge, the SDE under consideration or its any generalizations have not been studied in the literature. The approach of the proof of existence is significantly original. The SDE also gives a semi-Martingale representation of a semi-Markov process which need not be time-homogeneous. This representation is also not present in the literature. The detailed proof of the fact that the solution gives a semi-Markov process with desired transition kernel is valuable for all future study of this representation. This is a vital contribution of this paper. Finally, the semimartingale representation has been used to generate a correlated semi-Markov system with multiple members.

 In veiw of the immense applicability of the  correlated semi-Markov system, the formulation and the results presented in this paper are important. It is evident that SDE \eqref{eqX}-\eqref{eq_N(t)} generates a semi-Markov flow. Using the explicit construction of the solution of the SDE, the questions related to the meeting and merging events of multiple particles of a semi-Markov flow can be addressed in future studies.

 \section*{Acknowledgement}
 \noindent The authors are grateful to Tanmay Patankar for some helpful discussions.

\appendix
\section{}
\begin{lem}\label{lemma1}
For each fixed $\omega\in\Omega,$ consider the set $\mathcal{D}:=\{s'\in\ (0,\ \infty)\mid \wp(\omega)(\{s'\} \times E)>0\},$ where $E \in \mathcal{B}(\mathbb{R}),$ and $\wp$ is a Poisson random measure with intensity $m_{2}.$ If $m_{1}(E)<\infty,$ then set $\mathcal{D}$ has no limit point in $\mathbb{R}$ almost surely.
\end{lem}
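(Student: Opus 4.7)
The plan is to reduce the assertion that $\mathcal{D}$ has no limit point in $\mathbb{R}$ to the assertion that $\mathcal{D}\cap(0,T]$ is finite almost surely for each $T>0$, and then bootstrap along a countable sequence $T_k\uparrow\infty$. Since $\mathcal{D}\subset(0,\infty)$, every potential limit point in $\mathbb{R}$ must lie in $[0,\infty)$, so local finiteness on each bounded interval already rules out accumulation anywhere in $\mathbb{R}$.

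For the local finiteness, I would bound $\#(\mathcal{D}\cap(0,T])$ above by the Poisson count $\wp((0,T]\times E)$. Every element of $\mathcal{D}\cap(0,T]$ is by definition the time coordinate of at least one atom of $\wp$ in $(0,T]\times E$, so $\#(\mathcal{D}\cap(0,T])\le \wp((0,T]\times E)$ pathwise (the inequality rather than equality handles the harmless event that two atoms share the same first coordinate). By the defining property of the Poisson random measure with intensity $m_2$, the count $\wp((0,T]\times E)$ is Poisson distributed with mean $m_2((0,T]\times E)=T\cdot m_1(E)$, which is finite by hypothesis; hence $\wp((0,T]\times E)<\infty$ almost surely, and the same holds for $\#(\mathcal{D}\cap(0,T])$.

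Setting $\Omega_k:=\{\wp((0,k]\times E)<\infty\}$, each $\Omega_k$ has probability one, and so does $\Omega_\infty:=\bigcap_{k\ge 1}\Omega_k$. On $\Omega_\infty$, the set $\mathcal{D}$ contains only finitely many elements in every bounded subinterval of $(0,\infty)$, so no point of $\mathbb{R}$ can be a limit point of $\mathcal{D}$. I do not expect a substantial obstacle here; the lemma reduces to the elementary fact that a Poisson random variable with finite parameter is almost surely finite, applied to the restriction of $\wp$ to the strip $(0,T]\times E$, which has finite $m_2$-measure precisely because $m_1(E)<\infty$.
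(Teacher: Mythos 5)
Your proposal is correct and follows essentially the same route as the paper's own proof: bound $\#(\mathcal{D}\cap(0,T])$ by the Poisson count $\wp((0,T]\times E)$, which has finite mean $T\,m_1(E)$ and is hence almost surely finite, then intersect over a countable sequence of horizons to conclude local finiteness and the absence of limit points. No gaps.
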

\begin{proof}
If $m_{1}(E)<\infty$, for any natural number $n$, $\wp(\omega)([0,n]\times E)$ is a Poisson random variable with mean $n \times m_{1}(E)$. Hence $P(\wp([0,n]\times E)<\infty)=1$. Thus $\mathcal{D}\cap [0,n]$ is finite with probability 1 for each $n\ge 1$. Hence  $P(\underset{n>1}{\cap}\{\omega\ \mid  \mathcal{D}\cap [0,n] \textrm{ is finite} \})=1$. Therefore, $\mathcal{D}$ has no limit point in $\mathbb{R}$  w.p. $1.$\\
\end{proof}

\end{document}